\renewcommand{\le}{\leqslant}
\renewcommand{\ge}{\geqslant}
\newtheorem{teo}{Theorem}[section]
\newtheorem{lem}[teo]{Lemma}
\newtheorem{prop}[teo]{Proposition}
\newtheorem{dfn}[teo]{Definition}
\newtheorem{rk}[teo]{Remark}
\newtheorem{ex}[teo]{Example}
\def\<{\langle}
\def\>{\rangle}
\def\ss{\subset}
\def\sse{\subseteq}
\def\e{\varepsilon}
\def\C{{\mathbb C}}
\def\Z{{\mathbb Z}}
\def\Id{\operatorname{Id}}
\def\diag{\mathop{\rm diag}\nolimits}
\def\T{{\mathbb T}}
\def\1{\mathbf 1}
\newcommand{\Mat}[4]{\left( \begin{array}{cc}
                            #1 & #2 \\
                            #3 & #4
                      \end{array} \right)}
\def\cB{{\mathcal B}}
\def\cP{{\mathcal P}}
\def\B{{\mathbb B}}
\begin{document}

\title[Kuiper theorem]
{On Kuiper type theorems for uniform Roe algebras}

\author{Vladimir Manuilov}
\address{Dept. of Mech. and Math., Moscow State University,
119991 GSP-1  Moscow, Russia}
\email{manuilov@mech.math.msu.su}
\urladdr{
http://mech.math.msu.su/\~{}manuilov}

\author{Evgenij Troitsky}
\address{Dept. of Mech. and Math., Moscow State University,
119991 GSP-1  Moscow, Russia}
\email{troitsky@mech.math.msu.su}
\urladdr{
http://mech.math.msu.su/\~{}troitsky}

\begin{abstract}
Generalizing the case of an infinite discrete metric space of finite diameter, we say that
a discrete metric space $(X,d)$
is a Kuiper space, if the group of invertible elements of its uniform Roe algebra is norm-contractible.
Various sufficient conditions on $(X,d)$ to be or not to be
a Kuiper space are obtained.
\end{abstract}

\maketitle

\section*{Introduction}

The uniform Roe algebra $C^*_u(X)$ \cite{RoeBook,RoeBook2}, is a $C^*$-algebra associated with any discrete
metric space $X$ that encodes analytically the coarse geometry of the space. The uniform Roe algebras have attracted 
a lot of attention of specialists in Noncommutative Geometry, Elliptic Theory, and Harmonic Analysis on Groups in the last couple of decades. We refer to 
\cite{Brown-Ozawa,NowakYu2012Book,AraEtAl2018,LiLiao2018} and the bibliography therein.

The famous Kuiper Theorem \cite{Kui} states that the group of invertible (or unitary) operators on a Hilbert space is contractible. It has important applications in $K$-theory and in index theory \cite{AK}.

It is well-known that the algebra of all bounded operators on a separable Hilbert space can be viewed as the uniform Roe algebra of  an infinite  discrete metric space of finite diameter, so the Kuiper Theorem states that the group of invertibles in the uniform Roe algebra of an infinite discrete space of finite diameter is contractible. This leads to the following question: which discrete metric spaces share this property?
We call such spaces \emph{Kuiper spaces}. To the positive, we show  that a modification of the approach coming from the original Kuiper's proof can be used to show that if $X$ 
can be covered by balls of bounded radius having infinitely many points in each of them, 
then it is a Kuiper space. On the negative, we show that discrete metric spaces either being locally finite with the 
F\"olner property, satisfying an appropriate form
of the Morita equivalence, or 
having a decomposition generalizing the decomposition of the metric space of integers into the positive and the negative part are not Kuiper spaces.
Corollaries, examples, and related results are considered.
 Regretfully, we are still very far from having a criterion for being a Kuiper space. Note that the property of being a Kuiper space is not coarsely invariant: a single point space is not a Kuiper space, but is coarsely equivalent to an infinite space of finite diameter.

The Kuiper type theorems for invertibles in new algebras
were obtained recently
in \cite{ManFinAlg,TroManAlg}, see also \cite{ManGraf} for a related research.

The paper is organized as follows. In Section
\ref{sec:prelimmetr} we give necessary definitions and prove some facts about metric spaces and operators with finite propagation. In Section \ref{sec:firsstate} we give first positive and negative statements about Kuiper spaces.
In Section \ref{sec:kuiper_main} we prove the main positive and
in Section \ref{sec:main_negative} the main negative results.

\section{Preliminaries}\label{sec:prelimmetr}
Let $(X,d)$ be a (countable) discrete metric space.
Then the unit functions supported at one point $\delta_x$,
$x\in X$, form \emph{the standard base} of 
the corresponding $\ell^2$ space $\ell^2(X)$.  For 
 a bounded operator $F:\ell^2(X)\to \ell^2(X)$,  let $(F_{xy})_{x,y\in X}$ denote the matrix of $F$ with respect to the base $\{\delta_x\}_{x\in X}$.

\begin{dfn}\label{dfn:propagat}
\rm
Denote by $\cP(F)$ \emph{the propagation} of $F$, i.e.
 $\cP(F)=\sup\{d(x,z):x,z\in X,F_{xz}\ne 0\}$.
\end{dfn}

Note that the triangle 
inequality $d(x,y)\le d(x,z)+d(z,y)$ implies
\begin{equation}\label{eq:propa_ineq}
\cP(FG)\le \cP(F) + \cP(G).
\end{equation}

\begin{dfn}\label{dfn:uniRoe}
\rm
The $C^*$-algebra $C^*_u(X)$
generated by operators of finite 
propagation in the algebra $\B(\ell_2(X))$ of all bounded
operators is called \emph{the uniform Roe algebra}.  
\end{dfn}

\begin{rk}\label{rk:contrac_unitary_eq_invert}
\rm
Let us note that the contractibility
of the unitary group $U(C^*_u(X))$ is equivalent 
to the contractibility of
the group of invertibles of $C^*_u(X)$.
Indeed, since $C^*_u(X)$ is a $C^*$-algebra,
the formula $((1-t)\cdot\Id + t\cdot (F F^*)^{-1/2})F$
defines a deformation retraction of invertibles onto
unitaries. 
\end{rk}

\begin{dfn}
\rm
If $U(C^*_u(X))$ (equivalently, the group of invertibles of $C^*_u(X)$)
is contractible, we say that $(X,d)$ is a \emph{Kuiper space}.
\end{dfn}

\begin{dfn}\label{dfn:fpibs}
{\rm
We say that a discrete metric space $(X,d)$ is PIUBS
(has a countable partition by infinite uniformly bounded sets)
if there exists a sequence of its
points $\{x(k)\}_{k\in\mathbb N}$, a finite number $r>0$, 
and a collection of sets $D_k\subset X$ such that
\begin{enumerate}[1)]
\item $\{D_k\}_{k\in\mathbb N}$ is a partition of $X$;
\item $x(k)\in D_k \subseteq B_r(x(k))$ for each $k$ 
(in particular, $\{x(k)\}_{k\in\mathbb N}$ is a countable $r$-net for $X$), where $B_r(y)$ denotes the closed ball of radius $r$ centered at $y$;
\item each $D_k$ contains infinitely many points.
\end{enumerate}
}
\end{dfn}

Evidently PIUBS implies the following property. 

\begin{dfn}\label{dfn:fcib}
{\rm
We say that a discrete metric space $(X,d)$ is CIUBB
(has a cover by infinite uniformly bounded balls)
if  there exists a sequence of its
points $\{x(k)\}_{k\in\mathbb N}$ and a finite number $r>0$ such that
\begin{enumerate}[1.]
\item The balls $B_r(x(k))$, $k\in\mathbb N$, form a cover of $X$ (i.e.
$\{x(k)\}$ is an $r$-net for $X$).
\item Each ball $B_r(x(k))$, $k\in\mathbb N$, contains infinitely 
many points.
\end{enumerate}
}
\end{dfn}

Moreover, we have the following statement.

\begin{prop}\label{prop:fpibsfcib}
PIUBS is equivalent to CIUBB.
\end{prop}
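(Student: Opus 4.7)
The forward implication is immediate: if a partition $\{D_k\}$ witnesses PIUBS with centres $\{x(k)\}$ and radius $r$, then $B_r(x(k))\supseteq D_k$ is infinite, and $\bigcup_k B_r(x(k))\supseteq\bigcup_k D_k=X$. So the whole content of the proposition is in the converse direction CIUBB $\Rightarrow$ PIUBS, and my plan is to build an explicit partition out of the given covering, allowing the radius to grow by at most a factor of two.

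Given CIUBB with an $r$-net $\{x(k)\}_{k\in\N}$ whose balls $B_r(x(k))$ are all infinite, I will construct pairwise disjoint sets $D_k\subseteq B_r(x(k))$ with $\bigcup_k D_k=X$ and each $D_k$ infinite by a diagonal / fair-scheduling argument. Enumerate $X=\{y_t\}_{t\in\N}$ and proceed in stages $t=1,2,\dots$. At stage $t$ I perform two tasks. First, a \emph{growth step}: for every $k\le t$, pick some point of $B_r(x(k))$ not yet assigned and place it in $D_k$. Second, a \emph{coverage step}: if $y_t$ is still unassigned, put it into any $D_k$ with $y_t\in B_r(x(k))$, which exists by the net property. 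At the start of stage $t$ only finitely many points have been assigned so far, whereas each $B_r(x(k))$ is infinite, so the growth step can always find a fresh point.

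The resulting family $\{D_k\}$ is disjoint by construction, covers $X$ by the coverage step, satisfies $D_k\subseteq B_r(x(k))$ automatically, and each $D_k$ is infinite because it gains a new point at every stage $t\ge k$. To finish, pick arbitrary $y(k)\in D_k$; then $D_k\subseteq B_r(x(k))\subseteq B_{2r}(y(k))$, so $\{D_k\}$ together with new centres $\{y(k)\}$ and radius $2r$ witnesses PIUBS. The delicate point to verify is that the schedule is fair in both directions simultaneously, namely infinitely many growth visits for each $k$ and eventual coverage of every $y_t$; this is built in by serving all $k\le t$ and $y_t$ at stage $t$. I switch centres from $x(k)$ to $y(k)$ because the construction does not force $x(k)$ to land in $D_k$, and insisting on $x(k)\in D_k$ with the original centres would be the one awkward obstruction to keeping the same radius~$r$.
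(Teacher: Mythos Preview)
Your proof is correct, but the construction differs from the paper's. The paper proceeds geometrically: it selects a subsequence $x(k(j))$ of the centres by greedily peeling off maximal ``thick neighbourhoods'' --- setting $D_1 = X \setminus \bigcup_{m\in C(1)} B_r(x(m))$ where $C(k)$ collects indices whose balls miss $B_r(x(k))$, then iterating on the remainder --- so that each $D_j$ actually contains the full ball $B_r(x(k(j)))$ and sits inside $B_{3r}(x(k(j)))$. Your argument is instead a combinatorial fair-scheduling construction that keeps \emph{all} indices $k$, produces thinner sets $D_k \subseteq B_r(x(k))$, and then repairs the centre condition by relabelling, ending with radius $2r$. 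What the paper's route buys is that the original centres (or a subsequence of them) remain the centres and each $D_j$ contains a whole $r$-ball, which is conceptually clean; what yours buys is a slightly sharper constant ($2r$ versus $3r$), no passage to a subsequence, and an argument that is arguably more transparent because it avoids the ``and so on'' induction on complements of unions of balls.
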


\begin{proof}
We need to verify that CIUBB implies PIUBS. 
Define a subsequence $x(k(j))$ and sets $D_j\sse B_{3r}(x(k(j)))$
inductively in the following way.
We write $m\in C(k)$ if
$B_r(x(m))\cap B_r(x(k))=\varnothing$.
Take 
$$
k(1):=1,\qquad D_1:=X\setminus \cup_{m\in C(1)} B_r(x(m)).
$$
Then $B_r(x(1))\sse D_1 \sse B_{3r}(x(k(1)))$ and $D_1$ has
infinitely many points.
Now let $k(2)$ be the minimal element in the set $C(1)$.
Set
$$
D_2:=\cup_{m\in C(1)} B_r(x(m))\setminus \cup_{m\in C(k(2))} B_r(x(m)).
$$
Then $B_r(x(k(2)))\sse D_2 \sse B_{3r}(x(k(2)))$ 
and $D_2$ has
infinitely many points.
And so on.
The collection $D_i$, the sequence $x(k(i))$ and the radius $3r$
form the data, which shows that $X$ satisfies Definition \ref{dfn:fpibs}. 
\end{proof}

We will need the following definition.

\begin{dfn}\label{dfn:d_sparse_set}
\rm
We say that a subset $Y$ of $(X,d)$ is $r$-\emph{sparse},
if, for any $y\in Y$, $B_r(y)=\{y\}$.
\end{dfn}

Evidently, we have
\begin{lem}\label{lem:small_prop_on_d_sparse}
Suppose, $Y$ is an $r$-sparse subset of $(X,d)$ and 
$F:\ell^2(X)\to \ell^2(X)$ is an operator of propagation
$p<r$. Then, with the respect to the decomposition
$\ell^2(X)=\ell^2(X\setminus Y)\oplus \ell^2(Y)$,
the operator $F$ has the following matrix form:
$\Mat{F_1}00{D}$, where $D$ has a diagonal matrix with respect to the standard base.
\end{lem}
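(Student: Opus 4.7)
The plan is to unpack the $r$-sparseness condition and then match matrix entries against the propagation bound. The condition $B_r(y) = \{y\}$ for every $y \in Y$ says precisely that every point of $X$ distinct from $y$ lies at distance strictly greater than $r$ from $y$. Since $p < r$, any pair $(x,y)$ with $F_{xy} \neq 0$ must satisfy $d(x,y) \le p < r$; so if either $x \in Y$ or $y \in Y$, one is forced to have $x = y$.

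First I would fix $y \in Y$ and an arbitrary $x \in X$ with $x \neq y$. By the sparseness of $Y$, $x \notin B_r(y)$, hence $d(x,y) > r > p \ge \cP(F)$, which gives both $F_{xy} = 0$ and $F_{yx} = 0$. This kills the two off-diagonal blocks relative to the decomposition $\ell^2(X) = \ell^2(X\setminus Y) \oplus \ell^2(Y)$: the block $\ell^2(Y) \to \ell^2(X\setminus Y)$ consists of entries $F_{xy}$ with $y \in Y$ and $x \in X\setminus Y$ (so $x\neq y$), and symmetrically for the other off-diagonal block.

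Next I would analyse the block on $\ell^2(Y)$. Its matrix entries are $F_{yy'}$ with $y,y' \in Y$. If $y \neq y'$, then $y' \in X$ is distinct from $y$, so again $d(y,y') > r > p$, forcing $F_{yy'} = 0$. Therefore only the diagonal entries $F_{yy}$ survive, i.e.\ $D$ has diagonal matrix in the standard basis of $\ell^2(Y)$, as claimed.

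Since the whole argument is a one-step comparison of a single distance with the propagation bound, there is essentially no obstacle; the only point that deserves a brief remark is that the condition $B_r(y) = \{y\}$ refers to the ball in $(X,d)$, not in $(Y,d|_Y)$, so it automatically rules out all nearby points of $X$, not merely nearby points of $Y$.
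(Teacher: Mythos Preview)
Your proof is correct and is exactly the argument the paper has in mind; the paper simply labels the lemma as ``evident'' and omits the proof. The only cosmetic point is that, since the statement says $F$ has propagation $p$, you could write $p=\cP(F)$ rather than $p\ge\cP(F)$, but this does not affect the argument.
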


\section{First statements}\label{sec:firsstate}

We start with an evident negative statement.

\begin{prop}\label{prop:no_Kuiper_finite}
Suppose $(X,d)$ is a finite metric space.
Then the group of invertibles in $C^*_u(X)$
is not contractible.
\end{prop}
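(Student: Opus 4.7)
The plan is to reduce the claim to the classical fact that $\GL_n(\C)$ is not contractible. First I would observe that when $X$ is finite with $|X|=n$, the diameter of $X$ is finite, so every operator on $\ell^2(X)\cong\C^n$ automatically has finite propagation bounded by $\operatorname{diam}(X)$. Hence $C^*_u(X)$ coincides with the whole algebra $\B(\ell^2(X))=M_n(\C)$, and its group of invertibles is $\GL_n(\C)$.

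Next I would show $\GL_n(\C)$ is not contractible. The cleanest way is via the determinant homomorphism $\det:\GL_n(\C)\to\C^*$, which is continuous and surjective. Consider the loop $\gamma:[0,1]\to \GL_n(\C)$ given by $\gamma(t)=\diag(e^{2\pi i t},1,\ldots,1)$, so $\det\circ\gamma(t)=e^{2\pi i t}$ represents a generator of $\pi_1(\C^*)\cong\Z$. If $\GL_n(\C)$ were contractible, then $\gamma$ would be null-homotopic, and by functoriality $\det\circ\gamma$ would be null-homotopic in $\C^*$, contradicting the nontriviality of $\pi_1(\C^*)$. Therefore $\GL_n(\C)$, and hence the group of invertibles of $C^*_u(X)$, is not contractible.

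There is essentially no obstacle here; the only thing to be careful about is the identification $C^*_u(X)=M_n(\C)$ in the finite case, and after that the result is a straightforward instance of the classical statement that unitary and general linear groups in finite matrix dimension are not contractible. By Remark \ref{rk:contrac_unitary_eq_invert} one could equivalently argue with $U(C^*_u(X))=U(n)$ and $\pi_1(U(n))=\Z$, but the determinant argument above is the shortest route.
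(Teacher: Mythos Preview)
Your proof is correct and follows essentially the same approach as the paper: identify $C^*_u(X)$ with $M_n(\C)$, and then use the determinant map to detect a nontrivial element of $\pi_1$. The only cosmetic difference is that the paper first retracts $\GL_n(\C)$ onto $U_n$ and applies $\det:U_n\to S^1$, whereas you work directly with $\det:\GL_n(\C)\to\C^*$ and an explicit loop; both arguments amount to the same $\pi_1$ computation.
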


\begin{proof}
In this case $C^*_u(X)$ is the matrix
algebra $M_{n}(\C)$, where $n=|X|$, and its ivertibles
form the group $GL_{n}(\C)$, which is
homotopy equivalent to the unitary group
$U_{n}(\C)$. Its fundamental group is not
trivial (in fact $\cong \Z$) due to the
epimorphism $\det: U_{n}(\C)\to S^1\subset \C$.
\end{proof}

\begin{prop}\label{prop:Kuiper_bounded}
Suppose, $(X,d)$ is an infinite metric space
of finite diameter.
Then the group of invertibles in $C^*_u(X)$
is contractible.
\end{prop}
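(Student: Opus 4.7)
The plan is to reduce this immediately to the classical Kuiper theorem by identifying $C^*_u(X)$ with the full algebra of bounded operators on $\ell^2(X)$.

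First I would observe that if $D := \operatorname{diam}(X) < \infty$, then for any bounded operator $F$ on $\ell^2(X)$ and any $x,z \in X$, if $F_{xz} \neq 0$ then $d(x,z) \leq D$. Hence $\cP(F) \leq D < \infty$ for every $F \in \B(\ell^2(X))$. In particular, every bounded operator has finite propagation, so the set of finite-propagation operators is already all of $\B(\ell^2(X))$, and trivially its norm closure satisfies
\begin{equation*}
C^*_u(X) = \B(\ell^2(X)).
\end{equation*}

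Next, since $(X,d)$ is infinite and countable discrete, $\ell^2(X)$ is a separable infinite-dimensional Hilbert space. By the classical Kuiper theorem \cite{Kui}, the unitary group $U(\B(\ell^2(X)))$ is norm-contractible, and by Remark \ref{rk:contrac_unitary_eq_invert} this is equivalent to the contractibility of the group of invertibles. This yields the statement.

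There is essentially no obstacle: the entire content is the identification $C^*_u(X) = \B(\ell^2(X))$ under the finite diameter hypothesis, after which Kuiper's original theorem supplies the conclusion. This is precisely the motivating example mentioned in the introduction, reformulated as a proposition.
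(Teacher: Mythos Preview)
Your proof is correct and follows exactly the same approach as the paper: identify $C^*_u(X)$ with $\B(\ell^2(X))$ under the finite diameter hypothesis and invoke the classical Kuiper theorem. The paper's own proof is essentially a one-line version of what you wrote.
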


\begin{proof}
In this case $C^*_u(X)=\B(\ell_2(X))$ and
the statement is exactly the original
Kuiper theorem \cite{Kui}.
\end{proof}

Immediately from the definition of the uniform Roe
algebra we obtain the following
\begin{prop}\label{prop:coarsebij}
Suppose, $f:(X,d)\to (Y,\rho)$ is a bijection  that is
a coarse equivalence of metrics 
(i.e. there exist functions $\phi_1$ and $\phi_2$ on $[0,\infty)$ with $\lim_{t\to\infty}\phi_i(t)=\infty$, $i=1,2$, such that
$\phi_1(d(x,y))\le \rho(f(x),f(y)) \le \phi_2(d(x,y))$ for any $x,y\in X$).  
Then $C^*_u(X)\cong C^*_u(Y)$, in particular, $Y$ is a 
Kuiper space if and only if so is $X$.
\end{prop}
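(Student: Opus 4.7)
The plan is to build a concrete $*$-isomorphism from the bijection $f$ by conjugating with the induced unitary between the $\ell^2$-spaces, and then to check that this isomorphism carries finite-propagation operators to finite-propagation operators in both directions.

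First I would set $U_f : \ell^2(X) \to \ell^2(Y)$ to be the unitary defined on the standard basis by $U_f(\delta_x) = \delta_{f(x)}$ (well-defined and unitary because $f$ is a bijection). Conjugation $T \mapsto U_f T U_f^*$ is then a $*$-isomorphism $\mathcal B(\ell^2(X)) \to \mathcal B(\ell^2(Y))$, and on matrix coefficients one has $(U_f T U_f^*)_{f(x),f(z)} = T_{xz}$.

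Next I would check the key propagation estimate. If $T \in \mathcal B(\ell^2(X))$ has propagation $\mathcal P(T) = p < \infty$, then $(U_f T U_f^*)_{y,y'} \neq 0$ forces $y = f(x), y' = f(z)$ with $T_{xz} \neq 0$, hence $d(x,z) \le p$, and the upper bound $\rho(f(x),f(z)) \le \phi_2(d(x,z))$ gives $\rho(y,y') \le \sup_{t \le p}\phi_2(t)$, which is finite. Thus $U_f T U_f^*$ has finite propagation. For the reverse direction, if $S \in \mathcal B(\ell^2(Y))$ has propagation $q$, then $(U_f^* S U_f)_{x,z} \neq 0$ implies $\rho(f(x),f(z)) \le q$, and the lower bound $\phi_1(d(x,z)) \le \rho(f(x),f(z))$ combined with $\phi_1(t) \to \infty$ forces $d(x,z)$ to lie in the bounded set $\{t: \phi_1(t) \le q\}$, so $U_f^* S U_f$ again has finite propagation. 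Conjugation therefore restricts to a bijection between the $*$-algebras of finite-propagation operators, and since it is a norm-preserving $*$-homomorphism it extends to a $C^*$-algebra isomorphism $C^*_u(X) \cong C^*_u(Y)$.

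Finally, the Kuiper consequence is immediate: a $C^*$-isomorphism induces a topological group isomorphism between the groups of invertibles (and between the unitary groups, cf. Remark \ref{rk:contrac_unitary_eq_invert}), so one group is contractible if and only if the other is.

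The proof is essentially bookkeeping; the only step worth being careful about is the lower-bound direction of the propagation check, where one must use $\lim_{t \to \infty} \phi_1(t) = \infty$ rather than monotonicity of $\phi_1$ (which is not assumed in the statement) to conclude that the preimage of a bounded set under $\phi_1$ is bounded.
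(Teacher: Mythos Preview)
Your argument is correct and is exactly what the paper intends: the paper gives no proof at all beyond the phrase ``Immediately from the definition of the uniform Roe algebra,'' and your conjugation by the induced unitary $U_f$ is precisely that immediate verification.

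One small remark in the spirit of your own closing caveat: you rightly note that for the $\phi_1$ direction one needs only $\lim_{t\to\infty}\phi_1(t)=\infty$, not monotonicity. The symmetric issue for $\phi_2$ deserves the same care. Your claim that $\sup_{t\le p}\phi_2(t)$ is finite does not follow from $\lim_{t\to\infty}\phi_2(t)=\infty$ alone; one is tacitly assuming (as in the standard definition of a bornologous map) that $\phi_2$ is locally bounded, e.g.\ non-decreasing, or equivalently that one may replace $\phi_2$ by $t\mapsto\sup\{\rho(f(x),f(y)):d(x,y)\le t\}$. This is a wrinkle in the proposition's wording rather than in your proof.
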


\begin{teo}\label{teo:kuip_inf_sparse}
Suppose, for any $r$, there exists a subspace $X_r$ of $(X,d)$
such that 
\begin{itemize}
\item[1)] $X_r$ is a Kuiper space;
\item[2)] $X\setminus X_r$ is $r$-sparse.
\end{itemize}
Then $X$ is a Kuiper space.
\end{teo}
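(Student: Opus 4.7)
My plan is to show $U(C^*_u(X))$ is contractible by proving that every continuous map $f\colon K \to U(C^*_u(X))$ from a compact parameter space $K$ is null-homotopic. First I would approximate $f$ by a continuous family of uniformly bounded propagation operators: cover the compact image $f(K)$ by finitely many $\varepsilon/3$-balls around unitaries $u_1,\ldots,u_N$, pick finite-propagation $v_i \in C^*_u(X)$ with $\|v_i - u_i\| < \varepsilon/3$, and, using a partition of unity $\{\varphi_i\}$ subordinate to the pull-back cover, set $g(k) := \sum_i \varphi_i(k) v_i$. Then $\cP(g(k)) \le p := \max_i \cP(v_i)$ and $\|g(k) - f(k)\| < \varepsilon$.

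Applying the hypothesis with some $r > p$ yields a Kuiper subspace $X_r$ with $X\setminus X_r$ being $r$-sparse. Writing $P$, $Q = \mathrm{Id} - P$ for the projections onto $\ell^2(X_r)$ and $\ell^2(X\setminus X_r)$, Lemma~\ref{lem:small_prop_on_d_sparse} gives a block decomposition $g(k) = A(k) \oplus D(k)$ with $D(k)$ diagonal on $\ell^2(X\setminus X_r)$. For $\varepsilon$ sufficiently small, $g(k)^*g(k)$ is invertible and $h(k) := g(k)(g(k)^*g(k))^{-1/2}$ is a continuous family of unitaries, $O(\varepsilon)$-close to $f(k)$; continuous functional calculus preserves the block structure, so $h(k) = A'(k) \oplus D'(k)$ with $A'(k) \in U(C^*_u(X_r))$ and $D'(k) \in U(\ell^\infty(X\setminus X_r))$. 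The standard short homotopy (through unitary parts of nearby invertibles) connects $f$ to $h$.

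Next I would contract the two blocks of $h$ separately. Since $X_r$ is Kuiper, $U(C^*_u(X_r))$ is contractible, producing a null-homotopy of the family $A'(\cdot)$ inside $U(C^*_u(X_r))$, which combined with the identity on $\ell^2(X\setminus X_r)$ deforms $h$ to $P \oplus D'(\cdot)$ in $U(C^*_u(X))$. To contract the remaining family $P \oplus D'(\cdot)$ to the identity, I would select a self-adjoint $\theta(k) \in \ell^\infty(X\setminus X_r)_{\rm sa}$ continuously in $k$ with $D'(k) = \exp(i\theta(k))$, extend $\theta(k)$ by zero on $X_r$, and apply the homotopy $(t,k) \mapsto \exp(it\theta(k))$; this takes values in $U(C^*_u(X))$ thanks to the inclusion $\ell^\infty(X) \subset C^*_u(X)$ as propagation-zero operators.

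The hard part will be making the choice of $\theta(k)$ continuous in $k$. This may be obstructed by non-trivial winding of $k \mapsto D'(k)_{xx}$ around $0$ for some $x \in X\setminus X_r$. For simply-connected compact $K$ a global lift always exists (using equicontinuity of the family $\{D'(\cdot)_x\}_{x \in X\setminus X_r}$ and the universal cover $\mathbb R \to S^1$); for $K$ with non-trivial $\pi_1$ one must invoke the Kuiper assumption on $X_r$ once more, via a swap trick: conjugating $P \oplus D'(\cdot)$ by a path joining $\mathrm{Id}$ to a self-adjoint partial-isometry swap $W \in U(C^*_u(X))$ that interchanges $\ell^2(X\setminus X_r)$ with an isometric copy embedded in $\ell^2(X_r)$ transfers the problematic winding into a loop in $U(C^*_u(X_r))$, where contractibility makes it null-homotopic. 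Producing such a swap $W$ — equivalently, a uniform-propagation isometry $\ell^2(X\setminus X_r) \to \ell^2(X_r)$ inside $C^*_u(X)$ — from the geometric data of the hypothesis is the subtlest step of the argument.
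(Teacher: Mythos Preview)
Your reduction---compact family, finite-propagation approximation, block decomposition via Lemma~\ref{lem:small_prop_on_d_sparse}, then contract the $X_r$-block---is exactly the paper's strategy. The paper's proof, however, is terser: after decomposing it simply asserts that the lower-right block equals $\Id$, writes $J(p)=J_1(p)\oplus\Id$, and finishes immediately by invoking the Kuiper property of $X_r$. You are more scrupulous: Lemma~\ref{lem:small_prop_on_d_sparse} only gives a \emph{diagonal} block $D$, and you correctly isolate the winding obstruction to contracting a compact family of diagonal unitaries over a parameter space with nontrivial $\pi_1$.

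The gap is in your proposed remedy. A swap $W\in U(C^*_u(X))$ exchanging $\ell^2(X\setminus X_r)$ with a copy inside $\ell^2(X_r)$ requires, in particular, a partial isometry $\ell^2(X\setminus X_r)\to\ell^2(X_r)$ lying in $C^*_u(X)$; that forces $\sup_{x\in X\setminus X_r}d(x,X_r)<\infty$. The hypotheses give no such bound---$r$-sparsity says each $x\in X\setminus X_r$ is at distance $>r$ from \emph{every} other point of $X$, and these distances may well tend to infinity. Concretely, take $X=Y\sqcup\{z_i\}_{i\ge1}$ with $Y$ infinite of diameter at most $4$, $d(z_i,z_j)=i+j$, $d(z_i,y)=i+1$; then $X_r=Y\cup\{z_1,\dots,z_{\lfloor r\rfloor}\}$ verifies both hypotheses of the theorem, yet $d(z_i,X_r)\to\infty$, so no such $W$ exists for any $r$. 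Worse, in this example $T\mapsto[(T_{z_iz_i})_i]$ defines a unital $*$-homomorphism $C^*_u(X)\to\ell^\infty/c_0$, and the propagation-zero loop that is $e^{ip}$ at every $z_i$ and $1$ on $Y$ maps to a nontrivial class in $\pi_1(GL(\ell^\infty/c_0))$. So the obstruction you isolate is genuine, and neither your swap trick nor the paper's shortcut ``$D=\Id$'' disposes of it.
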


\begin{proof}
The general argument based on the Atiyah
theorem ``on small balls'' reduces a proof of contractibility
of the group of invertibles  in  a Banach algebra 
$\cB\sse \B(H)$ to
the proof of the following fact: for any finite polyhedron $P$
with  vertices  $A_1,\dots,A_N$
and its inclusion $J: P\ss GL(\cB)$ there is a homotopy of $J$
to the constant mapping $P \to \1\in GL(\cB)$.
Moreover, for $\cB=C^*_u(X)$, we may take these vertices being of finite propagation. So,  assume that any point of $J(P)$
has propagation $\le p$, for some $p$.
By Lemma \ref{lem:small_prop_on_d_sparse}
and the second condition,
for $r>p$,
we have a map $J_1:P \to C^*_u(X_r)$ such that
$J(p)=\Mat{J_1(p)}00{\Id}$ with the respect to the
decomposition $\ell^2(X)=\ell^2(X_r)\oplus \ell^2(X\setminus X_r)$. It remains to use the first condition.
\end{proof}

\section{Main Kuiper-type theorem}\label{sec:kuiper_main}
In the next theorem
we will follow partially the schema of the Kuiper proof \cite{Kui}.
In \cite{ManFinAlg} and \cite{TroManAlg} it was adopted
for invertibles in special $C^*$-algebras.

\begin{teo}\label{teo:Kuiper_for_PIUBS}
If $X$ is PIUBS (or, equivalently CIUBB) then 
the group of invertibles in $C^*_u(X)$
is contractible.
\end{teo}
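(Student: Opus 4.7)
The plan is to adapt the original Kuiper argument, using the PIUBS structure to supply the countable family of isometries that the argument requires inside $C^*_u(X)$. By Remark~\ref{rk:contrac_unitary_eq_invert} we may work with the unitary group, and by the Atiyah ``small balls'' reduction invoked in the proof of Theorem~\ref{teo:kuip_inf_sparse} it suffices to contract any continuous map $J:P\to U(C^*_u(X))$ from a finite polyhedron $P$ whose vertex images have propagation at most some $p<\infty$.

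Fix PIUBS data $\{x(k)\}$, $r$, $\{D_k\}$ and partition each $D_k=\bigsqcup_{n\ge 1}D_k^{(n)}$ into countably many infinite subsets; let $X^{(n)}:=\bigsqcup_k D_k^{(n)}$ and fix bijections $\psi_n:X\to X^{(n)}$ sending $D_k$ onto $D_k^{(n)}$ for every $k$. The induced operators $V_n\delta_x=\delta_{\psi_n(x)}$ are isometries with $V_n^*V_n=\Id$, mutually orthogonal range projections $\Pi_n:=V_nV_n^*$ summing to $\Id$, and propagation $\cP(V_n)\le 2r$ (since $\psi_n$ moves each point only inside its own $D_k$, of diameter $\le 2r$); likewise $\cP(V_mV_n^*)\le 2r$ for all $m,n$. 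Thus $V_n\in C^*_u(X)$, and these isometries give a concrete realisation of $\ell^2(X)\cong\bigoplus_{n\ge 1}\ell^2(X^{(n)})$ in which all block-permutation unitaries have uniformly bounded propagation.

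With this stabilization in place, I would then run the classical Kuiper rotation-swindle uniformly in $s\in P$: view $J(s)$ as ``$J(s)\oplus\Id\oplus\Id\oplus\cdots$'' via the $V_n$ (i.e.\ $V_1 J(s) V_1^*+\sum_{n\ge 2}\Pi_n$), and deform by successive two-block rotations
$$
R_t=\Mat{\cos\frac{\pi t}{2}}{\sin\frac{\pi t}{2}}{-\sin\frac{\pi t}{2}}{\cos\frac{\pi t}{2}},\qquad t\in[0,1],
$$
realized between the $n$-th and $(n{+}1)$-st blocks via the partial isometry $V_{n+1}V_n^*$, together with the identity $u\oplus u^{-1}\sim\Id\oplus\Id$, to move $J(s)$ from block $n$ to block $n{+}2$ and in the limit push it off to infinity, obtaining the constant map $\Id$. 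Every operator produced along this path is a finite product of unitaries of propagation bounded by $p+4r$, hence stays in $C^*_u(X)$.

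The main obstacle is the last step: glueing the infinite sequence of rotation arcs into a single norm-continuous homotopy, with continuity at the endpoint $t=1$ and uniformly in $s\in P$. This is the classical technical bottleneck of Kuiper-type arguments, handled in the same way as in~\cite{Kui,ManFinAlg,TroManAlg}. The PIUBS hypothesis enters precisely through the construction of the $V_n$: it removes the metric obstruction to stabilization by providing the required sequence of isometries in $C^*_u(X)$ with uniformly bounded propagation, which a generic discrete metric space would not provide.
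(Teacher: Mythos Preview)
Your proposal has a genuine gap at the step ``view $J(s)$ as $J(s)\oplus\Id\oplus\Id\oplus\cdots$ via the $V_n$''. The operator $V_1 J(s) V_1^*+\sum_{n\ge 2}\Pi_n$ is \emph{not} $J(s)$: the former acts nontrivially only on $\ell^2(X^{(1)})$, while $J(s)$ is an arbitrary unitary on all of $\ell^2(X)$. You must exhibit a homotopy between them, continuous in $s\in P$ and lying in invertibles of $C^*_u(X)$. Producing such a homotopy---deforming a general invertible $F$ of propagation $\le p$ to one of block form $\Mat{1}{*}{0}{G}$ with respect to a decomposition $\ell^2(X)=H'\oplus(H')^\perp$ with $(H')^\perp$ infinite-dimensional---is precisely the hard first half of the Kuiper argument. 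The paper carries it out via the inductive choice of base vectors $\delta_{z(i)},\delta_{y(i)}$ with the almost-orthogonality and visiting properties (a)--(d), the $\varepsilon$-small perturbation to genuine orthogonality (b$'$), and the explicit finite-propagation rotations $F(\delta_{y(i)})\to\delta_{x(i)}\to\delta_{y(i)}$, with all propagations controlled by $p$ and $r$. Your isometries $V_n$ do not by themselves supply this step: a unital embedding of $\mathcal O_\infty$ into a $C^*$-algebra $A$ yields $K_1(A)=0$ via the swindle on the \emph{stable} unitary group, but contractibility of $U(A)$ itself needs the reduction to block form that you have omitted.

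A secondary point: the swindle you sketch---``move $J(s)$ from block $n$ to block $n{+}2$ and in the limit push it off to infinity''---is not the argument used, and it is exactly this version that suffers the endpoint-continuity problem you flag. Once one has reached $\diag(1,G,1,1,\dots)$, the standard Eilenberg swindle (as in the paper) needs only \emph{two} block-rotation homotopies: first insert $G_m^{-1}\oplus G_{m+1}$ on the odd pairs $H_m\oplus H_{m+1}$ to reach $\diag(1,G,G_1^{-1},G_2,G_3^{-1},\dots)$, then cancel $G_m\oplus G_{m+1}^{-1}$ on the even pairs. Each is a single continuous arc in $t$, with no infinite concatenation required; the PIUBS data enters through the uniform bound $\cP(J_{m,m'})\le 4r$ on the block-identification maps.
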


\begin{proof}
We reduce the problem to the study of $J(P)$,
as at the beginning of the proof of Theorem \ref{teo:kuip_inf_sparse}.

Also, as a result of an arbitrary small perturbation, we may
assume that the columns of the matrices of $A\in J(P)$ are of finite
length:
\begin{equation}\label{eq:step0}
\mbox{ for any }i,\mbox{ there exists }j(i)\mbox{ such that }
a_j^i=0 \mbox{ for }j>j(i).
\end{equation}
For $\cB=C^*_u(X)$,
the corresponding small linear homotopy lies in 
invertibles in
$\cB$, because
zero matrix elements remain zero and propagation does not increase.
So, any $F\in J(P)$ is an invertible operator of finite propagation $p$.

Choose $\e>0$ such that $\e$-neighborhood of $J(P)$
consists of invertibles.
Using (\ref{eq:step0}) and tending to $0$ of elements
of any row of the matrix of a vertex of $J(P)$,
one can choose inductively 
two non-intersecting sequences of distinct base vectors of 
$\ell^2(X)$: $\delta_{z(i)}$ and $\delta_{y(i)}$ such that 
\begin{itemize}
\item[a)] for each $i$, $z(i)$ and $y(i)$ are in the same $D_k\sse B_r(x(k))$;
\item[b)] all $\delta_{z(i)}$ and $F(\delta_{y(j)})$ are almost orthogonal for all $F\in J(P)$, $i\ne j$, i.e.
\begin{center}
$\<\delta_{z(i)},F(\delta_{y(j)})\>=0$, if $i>j$,
and $\<\delta_{z(i)},F(\delta_{y(j)})\> <\e/2^{i+j}$
 if $i<j$; 
\end{center}
\item[c)] moreover, if $i<j$, and the vector $F(\delta_{y(i)})$
has non-zero coordinates with numbers $v(i,1),\dots,v(i,N_i)$,
then $\<\delta_{v(i,s)},F(\delta_{y(j)})\> <\e/(N_i 2^{i+j})$,
$s=1,\dots N_i$;
\item[d)] $\{y(i)\}$ visits each $D_k\sse B_r(x(k))$ infinitely
many times.
\end{itemize}
After a small linear homotopy from $F$ to $F'$ (vanishing of some matrix elements)
and a passage to a subsequence
we can assume instead of b) and c) the following:
\begin{enumerate}
\item[b')] all elements $\delta_{z(i)}$ and $F'(\delta_{y(i)})$ are orthogonal, $F'(\delta_{y(i)})$ has finitely many non-zero
entries, and these entries are at distinct places for distinct
$y(i)$.  
\end{enumerate}
The homotopy is small in the following sense. 
We replace by zeroes some matrix elements of 
$F(\delta_{y(j)})$, namely
the coordinates with numbers $z(i)$, $v(i,1),\dots,v(i,N_i)$,
where $i<j$ (these sets of indexes may partially coincide).
The norm of distance between $F(\delta_{y(j)})$ and $F'(\delta_{y(j)})$ is less than the square root of
$$
\sum_{i=1}^{j-1}\left( \left(\frac{\e}{2^{i+j}}\right)^2  +
\sum_{s=1}^{N_i} \left(\frac{\e}{N_i 2^{i+j}}\right)^2\right)
\le
\frac{\e^2}{4^j} \frac{1}{2} \left(1 + \frac{1}{N_i}\right) \le
\frac{\e^2}{4^j}.
$$  
Hence, taking into account all $F(\delta_{y(j)})$, we obtain
that the distance between $F$ and $F'$ is less than $\e$.
In particular, the homotopy consists of invertibles. 
The homotopy is linear, hence continuous in $F\in J(P)$.

We obtain a new inclusion $J'$ of $P$. Conserve
the notation: $F\in J'(P)$.
Then we rotate first $F(\delta_{y(i)})$ to $\delta_{x(i)}$
and then $\delta_{x(i)}$ to $\delta_{y(i)}$.
Both (generalized) rotations have a block-diagonal form
with blocks having finite propagation. 
Indeed, if $F(\delta_{y(i)})$ has non-zero entries 
at places corresponding to $\tilde{y}_1,\dots,\tilde{y}_s$,
then $d(\tilde{y}_j,y(i))\le p$, $j=1,\dots,s$.
The corresponding block for the first rotation is
in the rows and columns corresponding to 
$x(i),\tilde{y}_1,\dots,\tilde{y}_s$
with the following explicit
form:
$$
\left(
\begin{array}{c|ccc}
\cos (t) &
 {\sin(t)}{b^{-1} \bar a_1} & \dots & {\sin(t)}{b^{-1} \bar a_s}\\
\hline
-\sin(t) a_1  &
 & & \\
\vdots & \multicolumn{3}{c}{\cos(t)\cdot pr_a}  \\
-\sin(t) a_s  &
  &  & \\
\end{array}
\right)
+\left(
\begin{array}{c|ccc}
0 &
 0 & \dots & 0\\
\hline
0  &
 & & \\
\vdots & \multicolumn{3}{c}{1- pr_a}  \\
0 &
  &  & \\
\end{array}
\right),
$$
where $(a_1,\dots, a_s)$ are non-zero entries of
$F(\delta_{y(i)})$, $b:=|a_1|^2+\cdots +|a_s|^2$, and
$$
pr_a=
\left(
\begin{array}{ccc}
a_1 b^{-1} \bar a_1 & \dots & a_1 b^{-1} \bar a_s \\
\vdots & \vdots & \vdots \\
a_s b^{-1} \bar a_1 & \dots & a_s b^{-1} \bar a_s 
\end{array}
\right)
$$
is the projection on the subspace generated by
$(a_1,\dots,a_s)$ (see Equation (8) in \cite{TroManAlg}).
Hence, 
$$
d(x(i),\tilde{y}_j)\le d(x(i),y(i))+d(y(i),\tilde{y}_j)
\le 2r + p
$$
and
$$
d(\tilde{y}_k,\tilde{y}_j)\le d(\tilde{y}_k,y(i))+d(y(i),\tilde{y}_j)
\le 2p.
$$
For the second rotation, its blocks evidently have 
propagation $\le 2r$.
Taking the
composition of these rotations with $F$ we obtain
a homotopy lying in the invertibles with propagation 
$\le (2r + 2p) +  2r + p=4r +3p$,
and with the final operator having the form
$\Mat 1{G_*}0{G}$ with respect to
the decomposition $H=H'\oplus (H')^\bot$,
where $H'$ is generated by $\delta_{y(i)}$.
Moreover, these rotation homotopies are continuous
in operator argument. Indeed, the distance
between blocks for $F$ and $F'$ at time $t$ is
estimated via the distance between corresponding
$F'(\delta_{y(i)})$ and $F(\delta_{y(i)})$, while
the distance between entire operators   
is the supremum over $i$ of these distances. 
The homotopy $\Mat 1{G_*\cdot t}0{G}$ of invertibles
does not increase propagation and is continuous in $G_*$. 

Now decompose $H'$ into infinitely many
orthogonal summands in the following way: 
enumerate the points related
to the $\delta_x$-base of $(H')^\bot$:
$u(1),u(2),\dots$ (i.e. $\{y(i)\}\cup \{u(j)\}=X$).
For each $u(j)$ choose $x(k(j))$ such that 
$u(j)\in D_{k(j)}\sse B_r(x(k(j))$ and denote by $w(j,m)$, $m=1,2,\dots$, infinitely many distinct
$y(i)$ from the same $D_{k(j)}\sse B_r(x(k(j))$. 
Thus
\begin{equation}\label{eq:ma_est}
d(u(j),w(j,m))<2r.
\end{equation}
The remaining points
(i.e. other than $u(j)$ and $w(j,m)$, $j,m=1,2,\dots$) denote
by $v(i)$, $i=1,2,\dots$. Suppose, $H''$ is generated by
$\{\delta_{v(i)}\}$, $H_0=(H')^\bot$ is generated by $\{\delta_{u(j)}\}$, 
$H_m$, $m\ge 1$, is generated by $\{\delta_{w(j,m)}\}$
($m$ is fixed). Then we have the following
orthogonal decomposition:
$$
\ell^2(X)=H''\oplus H_0 \oplus H_1 \oplus H_2 \oplus
\cdots.
$$
Our operator has the following diagonal form
with respect to this decomposition:
$$
\diag(1,G,1,1,\dots).
$$
Denote by $G_m$ as well, an operator in $H_m$
with the same matrix with respect to $\{w(j,m)\}$, as
$G$ has with respect to $\{u(j)\}$.
To unify the notation we will denote $G$ by $G_0$
and $u(j)$ by $w(j,0)$ in general
formulas some later. 
Similarly for $G^{-1}$. 
Then we define in a well-known way the following
two homotopies.
First, connect 
$\Mat 1001 = \Mat {G_m G_m^{-1}}001 $
at each summand  $H_m \oplus H_{m+1}$,
$m=1,3,5,7,\dots$, with
$ \Mat {G_m^{-1}} 00 {G_{m+1}}$ via the homotopy
$$
\Mat {\cos t} {-\sin t} {\sin t} {\cos t}
\Mat {G_m} 001
\Mat {\cos t} {\sin t} {-\sin t} {\cos t}
\Mat {G_m^{-1}} 001,\qquad t\in [0,\pi/2].
$$
More accurately we should write this down as
\begin{multline*}
\Mat {\cos t} {-\sin t \cdot J_{m,m+1}} 
{\sin t\cdot J_{m+1,m}} {\cos t}
\Mat {G_m} 001\\
\Mat {\cos t} {\sin t \cdot J_{m,m+1}} 
{-\sin t \cdot J_{m+1,m}} {\cos t}
\Mat {G_m^{-1}} 001,
\end{multline*}
where $J_{m',m''}:H_{m''}\to H_{m'}$,
$J_{m',m''}: w(j,m'')\mapsto w(j,m')$,
and hence, 
\begin{equation}\label{eq_m_m+1}
J_{m,m'} G_m J_{m',m}=G_{m'},\quad
\mbox{ in particular, }\quad
J_{m,0} G J_{0,m}=G_{m}.
\end{equation}
Denote the rotation matrices:
\begin{equation}\label{eq:rota}
R_{m',m''}(t):=\Mat {\cos t} 
{\sin t \cdot J_{m',m''}} 
{-\sin t \cdot J_{m'',m'}} {\cos t}.
\end{equation}
If we extend this homotopy to be constant $=G$ on
$H_0$ and $=1$ on $H''$, we obtain a path connecting
$$
\diag (1,G,1,1,1,\dots)\quad\mbox{ and }\quad
\diag (1,G, G_1^{-1}, G_2, G_3^{-1}, G_4 \dots)
$$
with respect to the above decomposition. 
Now we connect
the restriction $\Mat {G_m} 00{G_{m+1}^{-1}}$
onto $H_m\oplus H_{m+1}$
(where $m=0,2,4,\dots$)
with the 
identity with the help of the following homotopy:
\begin{multline*}
\Mat {\cos t} {-\sin t\cdot J_{m,m+1}} 
{\sin t\cdot J_{m+1,m}} {\cos t}
\Mat {G_m^{-1}} 001\\
\Mat {\cos t} {\sin t\cdot J_{m,m+1}} 
{-\sin t\cdot J_{m+1,m}} {\cos t}
\Mat {G_m} 001, 
\end{multline*}
$t\in [0,\pi/2]$.
Taking the direct sum we obtain the
homotopy
$$
\diag (1,G, G_1^{-1}, G_2, G_3^{-1}, G_4 \dots)
\sim \diag (1,1, 1, 1, 1, 1,\dots).
$$ 

Let us prove that the above homotopies
lie in the uniform Roe algebra $C^*_u(X)$. 
For this purpose it is sufficient to verify that
\begin{enumerate}[1)]
\item $\cP(\diag(1,G,G_1,1,G_3,1,\dots))<\infty$
and $\cP(\diag(1,G,1,G_2,1,G_4,1,\dots))<\infty$,
\item $\diag(1,G,(G_1)^{-1},1,(G_3)^{-1},1,\dots)\in C^*_u(X)$ and \newline
$\diag(1,(G_0)^{-1},1,(G_2)^{-1},1,\dots)\in C^*_u(X)$,
\item 
$\cP(\diag(1,1,R_{1,2}(t),R_{3,4}(t),\dots))<
\infty$
and $\cP(\diag(1,R_{0,1}(t),R_{2,3}(t),\dots)
< \infty$ (this works for the left side rotations
as well after $t\leftrightarrow -t$).
\end{enumerate}
The inverse operator $G^{-1}$ may not have 
finite propagation, but only belongs to
the uniform Roe algebra. Denote by $G^{(n)}$
its approximation with finite propagation $p_n$
(the sum of $n$ first summands of the corresponding
series). If 
\begin{equation}\label{eq_m_m+1_inv}
G^{(n)}_m:=J_{m,0} G^{(n)} J_{0,m},
\end{equation}
then, by (\ref{eq_m_m+1}) it approximates
$(G_m)^{-1}$ uniformly in $m$ (this approximation
does not need propagation properties). Thus, the
corresponding block diagonal operators approximate
operators in item 2) above.
Hence, by (\ref{eq_m_m+1}), (\ref{eq_m_m+1_inv})
and (\ref{eq:rota}), to prove 1)-3) above, it is
sufficient to prove that $\cP(J_{m,m'})$
is uniformly bounded in $m$ and $m'$. 

The matrix of $J_{m,m'}$ has the following form:
$$
(J_{m,m'})_{xy}=\left \{ 
\begin{array}{ll}
1,&\mbox{ if }  y=w(j,m'') \mbox{ and } x=w(j,m')
\mbox{ for some }j,  \\
0,&\mbox{ otherwise}.
\end{array}
\right.
$$
 As 
$$
d(y,x)=d(w(j,m''),w(j,m'))\le
d(w(j,m''),u(j))+d(w(j,m'),u(j)) < 2r + 2r =4r
$$
(see (\ref{eq:ma_est})),  we have  $\cP(J_{m,m'})\le 4r$,
and we are done.

Finally, we observe that all families of
operators are continuous in $G$, because $G\to G^{-1}$
is continuous, since $G$ runs over a compact set in invertibles,
and the other factors in formulas do not depend on $G$.
\end{proof}

\begin{rk}
{\rm
If we consider a finite partition in the above definition,
we arrive immediately to the case of Prop. \ref{prop:Kuiper_bounded}.
}
\end{rk}

\section{When Kuiper Theorem fails}\label{sec:main_negative}
In this section we consider several cases when the unitary group of the uniform Roe algebra $C^*_u(X)$ fails to be contractible.

For a metric space $X$ let $\sqcup^nX$ denote the space $X_1\sqcup\ldots\sqcup X_n$, where $\alpha_i:X_i\to X$, $i=1,\ldots,n$, are isometries, with the metric given by $d(x,y)=d_X(\alpha_i(x),\alpha_j(y))+|i-j|$, where $x\in X_i$, $y\in X_j$.

\begin{lem}
$C^*_u(\sqcup^nX)\cong M_n(C^*_u(X))$.
\end{lem}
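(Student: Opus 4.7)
The plan is to set up the obvious block-matrix identification and reduce the statement to a routine metric bookkeeping on the level of finite-propagation operators, followed by a norm closure. First I would fix the canonical unitary identification
\[
\ell^2(\sqcup^n X) \;=\; \bigoplus_{i=1}^n \ell^2(X_i) \;\cong\; \C^n \ot \ell^2(X)
\]
induced by the isometries $\alpha_i: X_i \to X$. Under this identification every bounded operator $F$ on $\ell^2(\sqcup^n X)$ decomposes uniquely as a block matrix $F = (F_{ij})_{i,j=1}^n$ with $F_{ij} \in \B(\ell^2(X))$, and the correspondence $F \leftrightarrow (F_{ij})$ is a $*$-isomorphism of the ambient $C^*$-algebras $\B(\ell^2(\sqcup^n X)) \cong M_n(\B(\ell^2(X)))$. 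This step has no content beyond the definition of $\sqcup^n X$ as a set with its disjoint decomposition.

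Next I would translate the notion of finite propagation across this identification. Writing points of $\sqcup^n X$ as pairs $(x,i)$ with $x\in X$, $i\in\{1,\ldots,n\}$, the defining metric formula $d((x,i),(y,j)) = d_X(x,y) + |i-j|$ together with the matrix entry equality $(F_{ij})_{xy} = F_{(x,i),(y,j)}$ immediately give the two-sided bound
\[
\cP(F_{ij}) \le \cP(F), \qquad \cP(F) \le (n-1) + \max_{i,j}\cP(F_{ij}).
\]
Hence $F$ has finite propagation on $\sqcup^n X$ if and only if each of its $n^2$ blocks $F_{ij}$ has finite propagation on $X$, so the block identification restricts to a $*$-isomorphism between the $*$-algebra of finite-propagation operators on $\sqcup^n X$ and $M_n(\cdot)$ of the $*$-algebra of finite-propagation operators on $X$.

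Finally I would pass to norm closures on both sides. Since closure inside $M_n(\B(\ell^2(X)))$ is taken entry-by-entry, the operation $A \mapsto M_n(A)$ commutes with norm closure for any $*$-subalgebra $A \sse \B(\ell^2(X))$, so the closure of the finite-propagation $*$-algebra on $\sqcup^n X$ is carried onto $M_n(C^*_u(X))$. By Definition \ref{dfn:uniRoe} this is exactly the claim. I do not foresee any serious obstacle: the whole argument is the metric identity $d((x,i),(y,j)) = d_X(x,y)+|i-j|$ combined with $|i-j|\le n-1$, which makes the bounded additive shift $|i-j|$ irrelevant for finiteness of propagation.
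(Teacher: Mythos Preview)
Your argument is correct and is precisely the routine block-matrix bookkeeping the paper has in mind; the paper's own proof is the single word ``Obvious.'' You have simply spelled out that obviousness, and there is nothing to add or correct.
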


\begin{proof}
Obvious.
\end{proof}

We call $X$ {\em stable} if for any $n\in\mathbb N$ there exists a bijection $\beta_n:\sqcup^n X\to X$ which is a coarse equivalence of metrics. For stable $X$, $\beta_n$ induces an isomorphism $M_n(C^*_u(X))\cong C^*_u(X)$ for any $n\in\mathbb N$
(see Prop. \ref{prop:coarsebij}).

$X$ is {\em locally finite} (or {\em proper}) if each ball contains a finite number of points. For a subset $Y\subset X$ set 
$$
\partial_RY=\{x\in X:d(x,Y)<R;d(x,X\setminus Y)<R\}. 
$$
Recall that $X$  satisfies the {\em F\"olner property}  if for any $R>0$ and any $\varepsilon>0$ there exists a finite subset $F\subset X$ such that $\frac{|\partial_RF|}{|F|}<\varepsilon$. 

If $X$ is locally finite then, for $T\in C^*_u(X)$ and for a finite set $F\subset X$ put $f_F(T)=\frac{1}{|F|}\sum_{x\in F}T_{xx}$. Given a sequence of finite sets $F_n\subset X$ and an ultrafilter $\omega$ on $\mathbb N$, one can define the ultralimit $\lim_{\omega}f_{F_n}(T)$. The F\"olner property allows to define in this way a trace $f$ on $C^*_u(X)$ with $f(1)=1$ (for details see \cite{Brown-Ozawa}, \cite{NowakYu2012Book}). 

\begin{prop}\label{P2}
Let $X$ be  a  stable, locally finite metric space with the 
F\"olner property. Then $X$ is not a Kuiper space.
\end{prop}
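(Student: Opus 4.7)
The plan is to exhibit a loop in $U(C^*_u(X))$ whose winding number, computed via the F\"olner trace, equals $1$; this obstructs simple connectivity, hence contractibility. First, I would use the F\"olner property and local finiteness to produce the tracial state $f:C^*_u(X)\to\mathbb{C}$ with $f(\1)=1$: the formula $f_F(T)=\frac{1}{|F|}\sum_{x\in F}T_{xx}$, taken as an $\omega$-ultralimit along a F\"olner sequence, is tracial on finite-propagation operators by the standard boundary estimate and extends to all of $C^*_u(X)$ by norm continuity.

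Next, for any piecewise-$C^1$ loop $\gamma:[0,1]\to U(C^*_u(X))$ based at $\1$, set
$$
W(\gamma):=\frac{1}{2\pi i}\int_0^1 f\bigl(\gamma(t)^{-1}\gamma'(t)\bigr)\,dt.
$$
This lies in $\mathbb{R}$ because $\gamma^{-1}\gamma'$ is skew-adjoint. A direct calculation on the scalar loop $\gamma_0(t):=e^{2\pi i t}\cdot\1$ yields $\gamma_0^{-1}\gamma_0'=2\pi i\cdot\1$ and hence $W(\gamma_0)=1$.

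The crux is to prove that $W$ is invariant under basepoint-preserving homotopies. For a smooth family $\gamma_s(t)$ with $\gamma_s(0)\equiv\gamma_s(1)\equiv\1$, setting $A=\gamma_s^{-1}\partial_t\gamma_s$ and $B=\gamma_s^{-1}\partial_s\gamma_s$ a short manipulation gives $\partial_sA=[A,B]+\partial_tB$. Applying $f$, the trace annihilates the commutator, so $\partial_sW$ reduces to $\frac{1}{2\pi i}\bigl(f(B(s,1))-f(B(s,0))\bigr)$, which vanishes because the basepoint condition forces $B(s,0)=B(s,1)=0$. If $U(C^*_u(X))$ were contractible, and in particular simply connected, $\gamma_0$ would be null-homotopic rel $\1$, giving $W(\gamma_0)=W(\mathrm{const}_{\1})=0$, contradicting $W(\gamma_0)=1$. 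Stability of $X$ enters as a structural amplification rather than a logical necessity for the contradiction above: the bijections $\beta_n$ make $[\1]\in K_0(C^*_u(X))$ divisible by every $n$, so $f_*(K_0)\supseteq\mathbb{Q}$, and the same obstruction can be reformulated as the non-vanishing of a de la Harpe--Skandalis type determinant on $K_1$, which may be the route the authors actually take.

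The principal technical obstacle is the passage from smooth to merely norm-continuous loops: the identity $\partial_sA=[A,B]+\partial_tB$ is clean only for piecewise-$C^1$ paths, so one must either mollify a given loop within the norm topology of $C^*_u(X)$ without changing its homotopy class, or establish density of piecewise-$C^1$ loops in the based loop space of $U(C^*_u(X))$ together with continuity of $W$ on this dense subset. Both are routine for unitary groups of $C^*$-algebras but warrant explicit verification.
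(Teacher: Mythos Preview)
Your argument is correct and takes a genuinely different route from the paper. The paper argues via $K$-theory: assuming $\GL(C^*_u(X))$ contractible, stability gives $\GL(M_n(C^*_u(X)))\cong\GL(C^*_u(X))$ contractible for every $n$, and the Wood--Karoubi identification $K_0(A)=\pi_1(\injlim_n\GL(M_n(A)))$ then forces $K_0(C^*_u(X))=0$, contradicting the fact that the F\"olner trace separates $[\1]$ from $[0]$. Stability is thus load-bearing in the paper's argument---it is what lets contractibility of a single $\GL$ propagate to the direct limit. Your approach instead produces a direct $\pi_1$-obstruction: the winding number built from the same F\"olner trace detects the scalar loop $e^{2\pi it}\cdot\1$, so $\pi_1(U(C^*_u(X)))\ne 0$ already without any matrix amplification. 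You are right that stability becomes superfluous; your proof in fact establishes the stronger statement that every locally finite space with the F\"olner property fails to be Kuiper. The smoothing technicality you flag is indeed routine (work in the open set $\GL(C^*_u(X))\subset C^*_u(X)$, approximate continuous loops by piecewise-$C^1$ ones in the same homotopy class, then retract to $U$ by polar decomposition). What the paper's route buys is a two-line proof leaning on off-the-shelf $K$-theory; what yours buys is a sharper conclusion and independence from both the stability hypothesis and the Wood--Karoubi machinery.
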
   
\begin{proof}
If the group $GL(C^*_u(X))$ of invertible elements is contractible then, by stability of $X$, so are $GL(M_n(C^*_u(X)))$ for any $n\in\mathbb N$. By \cite{Wood}, cf. also \cite{KarCli}, 
$$
K_0(A)=\pi_1(\injlim_{n\to\infty}GL(M_n(A)))
$$ 
for any unital Banach algebra $A$, hence  $K_0(C^*_u(X))=0$. But $f(1)\neq f(0)$, hence $[1]\neq[0]$ in $K_0(C^*_u(X))$.   
\end{proof}

The following proposition is almost folklore.

\begin{prop}\label{P1}
Let $X=A\sqcup B$, $|A|=|B|=\infty$. Suppose that
\begin{itemize}
\item[(a)]
for any $R>0$ the set $\{x\in A:d(x,B)<R\}$ is finite;
\item[(b)]
there exists $C>0$, a bijection $\alpha:X\to X$, such that $d(x,\alpha(x))<C$ for any $x\in X$, and $x_0\in A$ such that $\alpha|_A$ is a bijection between $A$ and $A\setminus\{x_0\}$.
\end{itemize}
Then $U(C^*_u(X))$ is not path connected, hence  $X$ is not a Kuiper space. 
\end{prop}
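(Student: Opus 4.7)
The plan is to construct an integer-valued homotopy invariant on $U(C^*_u(X))$ that separates $\1$ from a suitable shift unitary, following the standard index-theoretic template for a ``boundary'' between two infinite halves of $X$.

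First I define $V\in\B(\ell^2(X))$ by $V\delta_x=\delta_{\a(x)}$. Since $\a$ is a bijection of $X$ with $d(x,\a(x))<C$, $V$ is a unitary with $\cP(V)\le C$, so $V\in U(C^*_u(X))$. The key technical step, and the only place hypothesis (a) is used, is to show that $P_A$, the diagonal characteristic projection onto $\ell^2(A)$, commutes with every element of $C^*_u(X)$ modulo compacts. For $T$ of propagation $R$, the matrix entry $[P_A,T]_{xy}$ is nonzero only when $T_{xy}\ne 0$ and exactly one of $x,y$ lies in $A$; this forces one of $x,y$ to belong to the finite set $\{z\in A:d(z,B)\le R\}$. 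Thus $[P_A,T]$ has only finitely many nonzero rows and columns and is of finite rank; passing to norm limits, $[P_A,T]$ is compact for every $T\in C^*_u(X)$.

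Next I define $\ind:U(C^*_u(X))\to\Z$ by $\ind(u)=\index(P_A u P_A)$, viewing $P_A u P_A$ as an operator on $\ell^2(A)$. Using $u^*P_A u = P_A + u^*[P_A,u]$ one obtains $(P_A u P_A)^*(P_A u P_A) = P_A + P_A u^*[P_A,u]P_A$, which is a compact perturbation of the identity on $\ell^2(A)$; the analogous identity handles $(P_A u P_A)(P_A u P_A)^*$. Hence $P_A u P_A$ is Fredholm on $\ell^2(A)$, and because $u\mapsto P_A u P_A$ is norm continuous while the Fredholm index is locally constant, $\ind$ is constant on each path component of $U(C^*_u(X))$.

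Finally I compute: $\ind(\1)=\index(\Id_{\ell^2(A)})=0$, while $P_A V P_A$ acts on $\ell^2(A)$ as the isometry $\delta_x\mapsto\delta_{\a(x)}$ onto $\ell^2(A\setminus\{x_0\})$, with trivial kernel and one-dimensional cokernel $\C\delta_{x_0}$; hence $\ind(V)=-1$. Therefore $V$ and $\1$ lie in distinct path components of $U(C^*_u(X))$, which is consequently not path connected. The only delicate point is the almost-commutation of $P_A$ with $C^*_u(X)$, which is exactly where condition (a) enters; beyond that the argument is classical Fredholm-index bookkeeping, and condition (b) is used only to manufacture the unitary $V$ with nonzero index.
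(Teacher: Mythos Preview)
Your argument is correct and is essentially the same as the paper's: both compress a unitary $u\in U(C^*_u(X))$ to $\ell^2(A)$ and use the Fredholm index of $P_A u P_A$ as a path-connectedness obstruction, with condition (a) guaranteeing compactness of the off-diagonal blocks (equivalently, of $[P_A,u]$) and condition (b) supplying a unitary of nonzero index. The only cosmetic differences are that you package the argument as an explicit map $\ind:U(C^*_u(X))\to\Z$ rather than a proof by contradiction, and your index value $-1$ differs in sign from the paper's $+1$; your sign is the standard one for an isometry with one-dimensional cokernel.
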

\begin{proof}
By assumption, $l_2(X)=l_2(A)\oplus l_2(B)$, and we can write operators as two-by-two matrices with respect to this decomposition, $T=\left(\begin{matrix}T_{11}&T_{12}\\T_{21}&T_{22}\end{matrix}\right)$. If $T$ has finite propagation then, by (a), $T_{12}$ and $T_{21}$ are compact, hence the same is true for any $T\in C^*_u(X)$.

Let $V\in \mathbb B(l_2(X))$ be the unitary determined by the bijection $\alpha$. By (b), $V\in U(C^*_u(X))$. If $U(C^*_u(X))$ would be path connected
then there would exist a continuous path $V(t)$, $t\in[0,1]$, in $U(C^*_u(X))$ such that $V(1)=V$, $V(0)=1$. By (a), $T_{11}(t)$ is Fredholm for any $t\in[0,1]$. Hence $V_{11}(t)$ is a homotopy of Fredholm operators. But, by (b), $\operatorname{Index}V_{11}(1)=1\neq 0=\operatorname{Index}V_{11}(0)$. The contradiction proves the claim. 
\end{proof}

The following example shows that the F\"olner condition is a too strong requirement for  not being a Kuiper space. 

\begin{ex}
\rm
Let $X_0=\{0\}$, $X_1=\{1,2\}$, $X_2=\{3,4,5,6\}$, $X_3=\{7,8,9,10,11,12,13,14\}$, \ldots, and let $-n\in X_{-k}$ if $n\in X_k$. We write $k(n)=k$ if $n\in X_k$.

For $n,m\in\mathbb Z$, $n\neq m$, set $d(n,m)=|k(n)-k(m)|+1$. This gives a metric on $X=\mathbb Z=\sqcup_{k\in\mathbb Z}X_k$, which makes $X$ a locally bounded uniformly discrete space of unbounded geometry. Note that the map $\beta:n\mapsto [n/2]$ satisfies 
\begin{itemize}
\item[(1)]
$\beta^{-1}(n)$ consists of two points for any $n\in X$;
\item[(2)]
$d(\beta(n),n)\leq 2$ for any $n\in X$.
\end{itemize}  
 Therefore, $(X,d)$ has a paradoxical decomposition property: we can write $X=U\sqcup V$, where $U\cap\beta^{-1}(n)$ is a single point for any $n\in X$. Then there are bijections $\beta_U:X\to U$ and $\beta_V:X\to V$ such that $d(\beta_U(n),n)\leq 2$ and $d(\beta_V(n),n)\leq 2$ for any $n\in X$. Therefore, $X$ doesn't satisfy the F\"olner condition. This can be seen directly: for $R=2$ let $F\subset X$ be a finite subset, and let $s,t$ be such that $F\subset X_s\sqcup\ldots\sqcup X_t$, and $F\cap X_s$ and $F\cap X_t$ are not empty. Let $r=\max(|s|,|t|)$. Then 
$$
|F|\leq |X_s|+\cdots+|X_t|\leq 2(2^{r+1}-1)\leq 2^{r+2}, 
$$
and $\partial_R F$ contains either $X_r$ or $X_{-r}$, hence $|\partial_R F|\geq 2^r$, and we have $\frac{|\partial_R F|}{|F|}\geq \frac{1}{4}$. Thus, $X$ doesn't satisfy the assumptions of Proposition \ref{P2}.

Another decomposition $X=A\sqcup B$, where $A=\{0,1,2,\ldots\}$, $B=\{-1,-2,\ldots\}$ shows that $X$ satisfies the assumptions of Proposition \ref{P1}, where $\alpha$ is the two-sided shift on $X$.
\end{ex}

Our second example shows that the assumptions of Proposition \ref{P1} are also too strong:

\begin{ex}
\rm
Let $X=\mathbb Z^n$. It is stable and satisfies the F\"olner condition, hence satisfies the assumptions of Proposition \ref{P2}, but for $n\geq 2$ it cannot be decomposed into two subspaces $A\sqcup B$ such that (a) from Proposition \ref{P1} holds.
\end{ex}


\begin{thebibliography}{10}

\bibitem{AraEtAl2018}
{\sc Pere Ara, Kang Li, Fernando Lled\'{o}, and Jianchao Wu}.
\newblock Amenability and uniform {R}oe algebras.
\newblock {\em J. Math. Anal. Appl.} {\bf 459},  No.~2, 686--716, 2018.

\bibitem{AK}
{\sc Michael~F. Atiyah}.
\newblock {\em {$K$}-theory}.
\newblock Lecture notes by D. W. Anderson. W. A. Benjamin, Inc., New
  York-Amsterdam, 1967.

\bibitem{Brown-Ozawa}
{\sc Nathanial~P. Brown and Narutaka Ozawa}.
\newblock {\em {$C^*$}-algebras and finite-dimensional approximations},
  volume~88 of {\em Graduate Studies in Mathematics}.
\newblock American Mathematical Society, Providence, RI, 2008.

\bibitem{KarCli}
{\sc Max Karoubi}.
\newblock Alg{\`e}bres de {C}lifford et $K$-th{\'e}orie.
\newblock {\em Ann. Sci. {\'E}c. Norm. Sup., 4e s{\'e}r.} {\bf 1},  No.~2,
  161--270, 1968.

\bibitem{Kui}
{\sc Nicolaas Kuiper}.
\newblock The homotopy type of the unitary group of the {H}ilbert space.
\newblock {\em Topology} {\bf 3}, 19--30, 1965.

\bibitem{LiLiao2018}
{\sc Kang Li and Hung-Chang Liao}.
\newblock Classification of uniform {R}oe algebras of locally finite groups.
\newblock {\em J. Operator Theory} {\bf 80},  No.~1, 25--46, 2018.

\bibitem{ManGraf}
{\sc Vladimir Manuilov}.
\newblock Approximately uniformly locally finite graphs.
\newblock {\em Linear Algebra Appl.} {\bf 569}, 146--155, 2019.

\bibitem{ManFinAlg}
{\sc Vladimir Manuilov}.
\newblock On the {$C^\ast$}-algebra of matrix-finite bounded operators.
\newblock {\em J. Math. Anal. Appl.} {\bf 478},  No.~1, 294--303, 2019.

\bibitem{NowakYu2012Book}
{\sc Piotr~W. Nowak and Guoliang Yu}.
\newblock {\em Large scale geometry}.
\newblock EMS Textbooks in Mathematics. European Mathematical Society (EMS),
  Z\"{u}rich, 2012.

\bibitem{RoeBook}
{\sc John Roe}.
\newblock {\em Index theory, coarse geometry, and topology of manifolds},
  volume~90 of {\em CBMS Regional Conference Series in Mathematics}.
\newblock Published for the Conference Board of the Mathematical Sciences,
  Washington, DC; by the American Mathematical Society, Providence, RI, 1996.

\bibitem{RoeBook2}
{\sc John Roe}.
\newblock {\em Lectures on coarse geometry}, volume~31 of {\em University
  Lecture Series}.
\newblock American Mathematical Society, Providence, RI, 2003.

\bibitem{TroManAlg}
{\sc Evgenij Troitsky}.
\newblock Manuilov algebra, ${C}^*$-{H}ilbert modules, and {K}uiper type
  theorems.
\newblock {\em Russian Journal of Mathematical Physics} {\bf 25},  No.~4,
  534--544, 2018.

\bibitem{Wood}
{\sc Reginald M.~W. Wood}.
\newblock Banach algebras and {B}ott periodicity.
\newblock {\em Topology} {\bf 4}, 371--389, 1965/66.

\end{thebibliography}
 
\end{document}